\documentclass[12pt]{gen-j-l}
\usepackage[dvips]{graphicx,color}
\usepackage{amsmath,amssymb,amsthm,amsfonts}
\usepackage[hypertex]{hyperref}
\hypersetup{colorlinks, citecolor=red,pdfstartview=FitH,pdfpagemode=None}

\newtheorem{theorem}{Theorem}[]
\newtheorem{lemma}[theorem]{Lemma}

\newtheorem{thm}[theorem]{Theorem}

\theoremstyle{definition}

\newtheorem{cor}[theorem]{Corollary}

%\numberwithin{equation}{section}

%%%%%%%%%%%%%%%%%%%%

             %% definition
 %% on the same #1 orbit %%
      %% underset %%
      %% overset  %%
       %% math bold font %%
       %% matrix %%
    %% small matrix %%
       %% determinant %%
       %% the rank of a matrix
\newcommand{\beq}{\begin{equation}}
\newcommand{\eeq}{\end{equation}}
\newcommand{\BEQ}{\begin{equation*}}
\newcommand{\EEQ}{\end{equation*}}

\newcommand{\bea}{\begin{eqnarray}}
\newcommand{\eea}{\end{eqnarray}}
\newcommand{\BEA}{\begin{eqnarray*}}
\newcommand{\EEA}{\end{eqnarray*}}

\newcommand{\bse}{\begin{subequations}}
\newcommand{\ese}{\end{subequations}}
\newcommand{\BSE}{\begin{subequations*}}
\newcommand{\ESE}{\end{subequations*}}

\newcommand{\bca}{\begin{cases}}
\newcommand{\eca}{\end{cases}}

\newcommand{\ben}{\begin{enumerate}}
\newcommand{\een}{\end{enumerate}}

\newcommand{\bit}{\begin{itemize}}
\newcommand{\eit}{\end{itemize}}

\newcommand{\bex}{\begin{ex}}
\newcommand{\eex}{\end{ex}}

%%%%%%%%%%%%%

\newcommand{\footer}[1]{{\def\thefootnote{}\footnotetext{#1}}}
\font\germ=eufm10
\def\a{{\mbox{\germ a}}}

\def\g{{\mbox{\germ g}}}

\def\gl{{\mbox{\germ gl}}}
\renewcommand{\sl}{{\mbox{\germ sl}}}

\def\p{{\mbox{\germ p}}}
\def\k{{\mbox{\germ k}}}

\def\R{\mathbb R}
\def\C{\mathbb C}

\def\Z{\mathbb Z}

\def\diag{{\mbox{diag}\,}}

\def\ad{{\rm ad}\,}

\def\Ad{{\rm Ad}\,}

\def\tr{{\rm tr}\,}
\def\GL{{\mbox{\rm GL}}}
\def\SL{{\mbox{\rm SL}}}

\def\Aut{\mbox{\rm Aut}\,}
\def\End{\mbox{\rm End}\,}

\newcommand{\ds}[1]{$\displaystyle{#1}$}

       %% det of matrix %%

\begin{document}
\title[On Kostant's partial order on hyperbolic elements]{On Kostant's partial order\\ on hyperbolic elements}
\author {Huajun Huang and Sangjib Kim}
\address{Department of Mathematics and Statistics, Auburn University,
AL 36849--5310, USA} \email{huanghu@auburn.edu}

\address{Department of Mathematics, The University of Arizona, Tucson,
AZ 85721--0089, USA} \email{sangjib@math.arizona.edu}

\begin{abstract}
We study Kostant's partial order on the elements of a semisimple Lie
group in relations with the finite dimensional representations. In
particular, we prove the converse statement of \cite[Theorem
6.1]{Ko} on hyperbolic elements.
\end{abstract}

\footer{2000 Mathematics Subject Classification: Primary 22E46} %%, Secondary 15A18.
\footer{Key Words: Semisimple Lie groups, Complete multiplicative
Jordan decomposition, Hyperbolic elements, Majorizations.}

\maketitle

A matrix in $\GL_n(\C)$ is called {\em elliptic} (resp. {\em
hyperbolic}) if it is diagonalizable  with norm 1 (resp. real
positive) eigenvalues. It is called {\em unipotent} if  all its
eigenvalues  are 1. The {\em complete multiplicative Jordan
decomposition} of   $g\in \GL_n(\C)$ asserts that $g=ehu$ for $e, h,
u\in\GL_n(\C)$,
 where $e$ is elliptic,  $h$ is hyperbolic, $u$ is unipotent,
 and these three elements commute (cf. \cite[p430-431]{He}).
 The decomposition can be easily seen when $g$ is in a
Jordan canonical form: if the diagonal entries (i.e. eigenvalues) of
the Jordan canonical form are $z_1,\cdots, z_n$, then
\beq\label{CMJD-matrix} %%
e=\diag\left(\frac{z_1}{|z_1|},\cdots, \frac{z_n}{|z_n|}\right),
\qquad h=\diag\left(|z_1|,\cdots,|z_n|\right), %%
\eeq %%
and $u=h^{-1}e^{-1}g$ is an upper triangular matrix with  diagonal
entries 1.

The above decomposition can be extended to semisimple Lie groups.
Let $G$ be a connected real semisimple Lie group with Lie algebra
$\g$. An element $e\in G$ is {\em elliptic} if $\Ad e\in \Aut \g$ is
diagonalizable over $\C$ with eigenvalues of modulus $1$.   An
element $h\in G$ is called {\em hyperbolic} if $h = \exp X$ where
$X\in \g$ is real semisimple, that is, $\ad X\in \End \g$ is
diagonalizable over $\R$ with real eigenvalues. An element $u\in G$
is called {\em unipotent} if $u = \exp X$ where $X\in \g$ is
nilpotent, that is, $\ad X\in \End \g$ is nilpotent. The {\em
complete multiplicative Jordan decomposition} \cite [Proposition
2.1]{Ko} for $G$ asserts that each $g\in G$ can be uniquely written
as
\begin{equation}\label{cmjd}
g = ehu,
\end{equation}
where $e$ is elliptic, $h$ is hyperbolic and $u$ is unipotent and
the three elements $e$, $h$, $u$ commute.  We write $g =
e(g)h(g)u(g)$.

When $G=\SL_n(\C)$, the above decomposition in $G$ coincides with
the   complete multiplicative Jordan decomposition in $\GL_n(\C)$
defined in the first paragraph. Let us elaborate on it. Given $g\in
\SL_n(\C)$, there exists $y\in \SL_n(\C)$ such that
$ygy^{-1}\in\SL_n(\C)$ is in a Jordan canonical form. Since the
complete multiplicative Jordan decompositions of $g$ in both
$\GL_n(\C)$ and $\SL_n(\C)$ are preserved by conjugations in
$\SL_n(\C)$, without lost of generality, we may assume that
$g\in\SL_n(\C)$ is already in a Jordan canonical form with diagonal
entries $z_1, z_2,\cdots, z_n$. Then the complete multiplicative
Jordan decomposition of $g$ in $\GL_n(\C)$ is $g=e h u$, where $e$
and $h$ are given in \eqref{CMJD-matrix} and $u=h^{-1}e^{-1}g$ is an
upper triangular matrix with diagonal entries 1. Let
$E_{ij}\in\gl_n(\C)$ denote the matrix with 1 in the $(i,j)$-entry
and 0 elsewhere. Then with respect to the following basis of
$\sl_n(\C)$: %%
$$\{E_{ij}\mid i\ne j,\ 1\le i,j\le n\}\cup\{E_{ii}-E_{(i+1)(i+1)}\mid 1\le i\le n-1\},$$ %%
the matrix  $\Ad e$ is diagonal with norm 1 eignevalues
$\displaystyle{ {z_iz_j^{-1}}/{|z_iz_j^{-1}|}}$ ($i\ne j$) and 1.
This shows that $e$ is elliptic in $\SL_n(\C)$. Clearly, $h=\exp X$
for a real semisimple element $X:=\log h\in\sl_n(\C)$, and $u=\exp
Y$ for a nilpotent element%%
$$Y:=\log u=-\frac{I_n-u}{1}-\frac{(I_n-u)^2}{2}-\cdots-\frac{(I_n-u)^{n-1}}{n-1}\in\sl_n(\C),$$ %%
where $I_{n}$ is the identity matrix. This shows that $h$ (resp.
$u$) is hyperbolic (resp. unipotent) in $\SL_n(\C)$. Thus $g=ehu$ is
also the complete multiplicative Jordan decomposition of $g$ in
$\SL_n(\C)$.

\medskip

Let us return to a connected real semisimple Lie group $G$ with Lie
algebra $\g$. Fix a Cartan decomposition $\g=\k\oplus\p$ (e.g.,
\cite {He}), and let $\a\subseteq\p$ be a maximal abelian
subalgebra. Let $A:=\exp \a$. Then the hyperbolic component $h(g)$
of $g$ is conjugate to some element $a(g)\in A$. See, for example,
\cite [Proposition 2.4]{Ko}. We denote %%
\beq\label{A(g)} A(g):=\exp (\text{conv}(W\cdot\log a(g))), \eeq %%
where $W$ is the Weyl group of $(\g, \a)$ and $\text{conv}(W\cdot
\log a(g))$ is the convex hull of the Weyl group orbit of $\log
a(g)$ in $\a$.

Kostant defined a partial order on the elements of $G$ as
\beq\label{partial-order}
 g_1\ge g_2\quad\Longleftrightarrow\quad A(g_1)\supseteq A(g_2),
\eeq and then established the following results:

\begin{theorem}\label{order-cond} \rm \cite[Theorem 3.1]{Ko}
Let $g_1, g_2\in G$. Then $g_1\ge g_2$   if and only if
$|\pi(g_1)|\ge|\pi(g_2)|$ for all finite dimensional representations
$\pi$ of $G$ where $|\cdot|$ denotes the   spectral radius.
\end{theorem}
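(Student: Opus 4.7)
My plan is to translate both conditions of the theorem into inequalities involving linear functionals on $\a$ evaluated at $X_i := \log a(g_i)$, and then bridge them using Hahn--Banach separation combined with a density argument for highest restricted weights. The preparatory reduction runs as follows. Because $e(g), h(g), u(g)$ commute, so do their images under any finite-dimensional representation $\pi$; since $\pi(e(g))$ has modulus-one eigenvalues and $\pi(u(g))$ is unipotent, one gets $|\pi(g)| = |\pi(h(g))| = |\pi(a(g))|$. Writing $X := \log a(g)$, decomposing the representation space into restricted $\a$-weight spaces $V = \bigoplus_\mu V_\mu$ (valid since $d\pi(X)$ is real-semisimple for $X \in \a$), and choosing representatives $X_i \in \overline{\a^+}$ in the $W$-orbit of $\log a(g_i)$, one obtains
\[
|\pi(g)| \;=\; \exp\Bigl(\max_{\mu\in \Pi(\pi)} \mu(X)\Bigr) \;=\; \exp\bigl(\lambda_\pi(X)\bigr),
\]
where $\Pi(\pi) \subset \a^*$ is the restricted weight set of $\pi$ and $\lambda_\pi$ its highest element, the maximum being attained at $\lambda_\pi$ because every weight differs from $\lambda_\pi$ by a non-negative combination of positive restricted roots, which are non-negative on $\overline{\a^+}$. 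On the other side, $A(g_1) \supseteq A(g_2)$ is equivalent to $X_2 \in \conv(W \cdot X_1)$.

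The forward direction then follows immediately: writing $X_2 = \sum_w t_w (wX_1)$ as a convex combination and using $W$-invariance of $\Pi(\pi)$, each $\mu \in \Pi(\pi)$ satisfies $\mu(X_2) \le \max_{\mu' \in \Pi(\pi)} \mu'(X_1)$, and hence $|\pi(g_2)| \le |\pi(g_1)|$.

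For the converse I would argue contrapositively. Suppose $X_2 \notin \conv(W \cdot X_1)$. Hahn--Banach provides $f \in \a^*$ with $f(X_2) > \max_w f(wX_1)$; translating $f$ by a Weyl element into the dual dominant chamber yields $\lambda \in \overline{(\a^+)^*}$ with $\lambda(X_2) > \lambda(X_1)$ (using $\lambda(wX) \le \lambda(X)$ whenever both $\lambda$ and $X$ are dominant). It remains to approximate $\lambda$ by a positive scalar multiple of the highest restricted weight $\lambda_\pi$ of some finite-dimensional $\pi$ of $G$, closely enough that the strict inequality $\lambda_\pi(X_2) > \lambda_\pi(X_1)$ persists; this would give $|\pi(g_2)| > |\pi(g_1)|$ and the desired contradiction.

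The main obstacle is this last density step. The approach is to fix a $\theta$-stable Cartan $\h = \t \oplus \a$ with $\t \subset \k$ and a positive root system for $(\g^\C, \h^\C)$ compatible with the chosen positive restricted roots on $\a$; the dominant integral weights of $\g^\C$ then restrict to elements of $\overline{(\a^+)^*}$ whose $\Q_{>0}$-span fills the dominant cone. A minor subtlety is that an irreducible $\g^\C$-representation may fail to descend to the given group $G$ (depending on $\pi_1(G)$), but the highest weights of honest $G$-representations form a finite-index sub-semigroup of the lattice of $\g^\C$-dominant integral weights, so every such weight has a positive integer multiple realized by a genuine $G$-representation, which is enough for the approximation since the strict inequality $\lambda(X_2) > \lambda(X_1)$ is invariant under positive scaling.
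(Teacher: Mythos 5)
This theorem is quoted from Kostant (\cite[Theorem 3.1]{Ko}); the paper under review gives no proof of it, so there is nothing internal to compare against. Your argument is, in essence, a reconstruction of Kostant's own proof: reduce the spectral radius to the hyperbolic part via the complete multiplicative Jordan decomposition, identify $|\pi(\exp X)|$ with $\exp$ of the maximal restricted weight evaluated at the dominant representative $X$, translate $g_1\ge g_2$ into $X_2\in\conv(W\cdot X_1)$, and prove the converse by Hahn--Banach separation plus density (in direction) of highest restricted weights in the dominant cone of $\a^*$. All the steps check out: the forward direction correctly uses $W$-invariance of the restricted weight set; the separation step correctly exploits $\langle w\lambda, X\rangle\le\langle\lambda,X\rangle$ for dominant $\lambda, X$; and the two genuine technical points --- that restrictions of $\h^*$-dominant integral weights to $\a^*$ are dense in direction in the restricted dominant cone (which rests on the restriction map carrying the dominant chamber onto the restricted dominant chamber for a $\theta$-compatible ordering), and that a suitable positive multiple of any dominant integral weight integrates to an honest representation of $G$ --- are both correctly identified and correctly resolvable as you describe. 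The only quibble is the phrase ``finite-index sub-semigroup'': when $\pi_1(G)$ is infinite this is not the right formulation, but what you actually use (the image of $\pi_1(G)$ in any finite-dimensional representation is finite, so $n\lambda$ descends to $G$ for some $n>0$) is true and suffices, since your strict inequality is invariant under positive scaling.
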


\begin{theorem}\label{question}\rm
\cite[Theorem 6.1]{Ko} Let $h_1, h_2\in G$ be hyperbolic. Write
$\chi_{\pi}$ for the character of a representation $\pi$ of $G$. If
$h_1\ge h_2$, then $\chi_{\pi}(h_1)\ge\chi_{\pi}(h_2)$ for all
finite dimensional representations $\pi$ of $G$.
\end{theorem}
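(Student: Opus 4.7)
The plan is to transfer the problem to the maximal abelian $\a\subseteq\p$ and express $\chi_\pi\circ\exp$ as a $W$-invariant convex function on $\a$; the partial order $h_1\ge h_2$ will then force the character inequality by a Jensen-type estimate. First, since $\chi_\pi$ is a class function and every hyperbolic element is $G$-conjugate to some element of $A$ by \cite[Proposition 2.4]{Ko}, replacing $h_i$ by $a_i\eqdef a(h_i)\in A$ gives $\chi_\pi(h_i)=\chi_\pi(a_i)$. Setting $X_i\eqdef\log a_i\in\a$, the hypothesis $h_1\ge h_2$ together with \eqref{A(g)} and \eqref{partial-order} unwinds (using injectivity of $\exp\colon\a\to A$) to $X_2\in\conv(W\cdot X_1)$, so that $X_2=\sum_j t_j w_j X_1$ for some $w_j\in W$ and $t_j\ge 0$ with $\sum_j t_j=1$.

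Next, for any finite dimensional representation $\pi\colon G\to\GL(V)$, the commuting operators $\{d\pi(X):X\in\a\}$ are real semisimple (real semisimplicity of $\ad X$ is preserved by representations of semisimple Lie algebras), hence simultaneously diagonalizable over $\R$. This yields a restricted weight decomposition $V=\bigoplus_{\mu\in\a^*}V_\mu$ with $V_\mu=\{v\in V:d\pi(X)v=\mu(X)v\text{ for every }X\in\a\}$, and
\beq\label{weight-expansion}
f(X)\eqdef\chi_\pi(\exp X)=\sum_\mu m_\pi(\mu)\,e^{\mu(X)}, \qquad X\in\a,
\eeq
where $m_\pi(\mu)=\dim V_\mu\ge 0$. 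Each summand is convex in $X$, so $f$ is convex on $\a$. Moreover $f$ is $W$-invariant: using the identification $W=N_K(\a)/Z_K(\a)$ for $K$ the maximal compact subgroup, any $k\in N_K(\a)$ lifting $w\in W$ satisfies $\pi(k)V_\mu=V_{w\mu}$, since for $v\in V_\mu$ and $X\in\a$,
\BEQ
d\pi(X)\pi(k)v=\pi(k)d\pi(\Ad(k^{-1})X)v=\pi(k)\mu(w^{-1}X)v=(w\mu)(X)\pi(k)v.
\EEQ
Hence $m_\pi(w\mu)=m_\pi(\mu)$ for all $w\in W$, and therefore $f(wX)=f(X)$.

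With $f$ both convex and $W$-invariant on $\a$, convexity combined with $W$-invariance gives
\BEQ
\chi_\pi(h_2)=f(X_2)\le\sum_j t_j\,f(w_j X_1)=\sum_j t_j\,f(X_1)=f(X_1)=\chi_\pi(h_1),
\EEQ
which is the desired inequality. The main obstacle is the $W$-invariance of the restricted weight multiplicities: it rests on the realization $W=N_K(\a)/Z_K(\a)$ inside $K$ together with the simultaneous diagonalizability of $\pi(A)$ coming from preservation of real semisimplicity by $d\pi$. Once these standard structural facts are in place, the theorem reduces to a clean application of convexity to a $W$-invariant sum of exponentials.
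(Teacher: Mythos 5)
Your proof is correct. The paper itself offers no proof of this statement --- it is quoted directly from Kostant's Theorem 6.1 --- but your argument (reduction to $A$ via the class-function property, the restricted weight expansion of $\chi_\pi\circ\exp$ with nonnegative multiplicities, $W$-invariance via $N_K(\a)/Z_K(\a)$, and convexity of a nonnegative sum of exponentials applied to $X_2\in\conv(W\cdot X_1)$) is sound, self-contained, and is essentially Kostant's original line of reasoning, resting only on structural facts the paper already records (e.g.\ that $d\pi$ preserves real semisimplicity).
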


In \cite [Remark 6.1]{Ko}, Kostant asked if the converse of Theorem
\ref{question} is true or not. Our goal is to   answer  this
question affirmatively and thus we have the following result:

\begin{thm}\label{complete}\rm
  Let $h_1$ and $h_2$ be two hyperbolic elements in a connected real semisimple Lie group $G$.
  Then $h_1\ge h_2$ if and only if $\chi_{\pi}(h_1)\ge \chi_{\pi}(h_2)$ for all finite dimensional representations $\pi$ of $G$.
\end{thm}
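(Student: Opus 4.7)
\medskip

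\noindent\textbf{Proof plan.} The forward implication is Theorem~\ref{question}; the content is the converse. Assume $\chi_\pi(h_1)\ge\chi_\pi(h_2)$ for every finite-dimensional representation $\pi$ of $G$. The plan is to deduce $|\pi(h_1)|\ge|\pi(h_2)|$ for every such $\pi$ and then invoke Theorem~\ref{order-cond} to conclude $h_1\ge h_2$. Because characters and the partial order are each invariant under conjugation, I first replace $h_i$ by conjugates $\exp X_i$ with $X_i\in\a$ lying in the closed dominant Weyl chamber; this leaves both hypothesis and conclusion unchanged.

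For an irreducible representation $\pi_\Lambda$ of $G$ with restricted highest weight $\lambda=\Lambda|_{\a}\in\a^*$, the operator $\pi_\Lambda(\exp X)$ has positive real eigenvalues $e^{\mu(X)}$ as $\mu$ ranges over the restricted weights of $\pi_\Lambda$, all of which lie in $\conv(W\cdot\lambda)$. Consequently, setting $M_\lambda(X):=\max_{w\in W}\lambda(wX)$, the spectral radius is $e^{M_\lambda(X)}$ and the character obeys the sandwich
\[e^{M_\lambda(X)}\;\le\;\chi_{\pi_\Lambda}(\exp X)\;\le\;\dim(\pi_\Lambda)\cdot e^{M_\lambda(X)}.\]
The main step is to apply this to $\pi_{N\Lambda}$, which is an irreducible constituent of the $G$-representation $\pi_\Lambda^{\otimes N}$ and hence itself a representation of $G$. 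By the Weyl dimension formula, $\dim\pi_{N\Lambda}$ grows only polynomially in $N$, so $(\dim\pi_{N\Lambda})^{1/N}\to 1$, and the sandwich yields
\[\chi_{\pi_{N\Lambda}}(\exp X)^{1/N}\;\longrightarrow\;e^{M_\lambda(X)}\;=\;|\pi_\Lambda(\exp X)|\qquad(N\to\infty).\]

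The character hypothesis applied to each $\pi_{N\Lambda}$ gives $\chi_{\pi_{N\Lambda}}(h_1)\ge\chi_{\pi_{N\Lambda}}(h_2)>0$; taking $N$-th roots and letting $N\to\infty$ produces $|\pi_\Lambda(h_1)|\ge|\pi_\Lambda(h_2)|$ for every irreducible $\pi_\Lambda$ of $G$. By complete reducibility of finite-dimensional representations of $G$ and the fact that the spectral radius of a direct sum is the maximum of the spectral radii of its summands, this inequality propagates to $|\pi(h_1)|\ge|\pi(h_2)|$ for all finite-dimensional $\pi$, and Theorem~\ref{order-cond} then delivers $h_1\ge h_2$. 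The substantive technical point is the limit $\chi_{\pi_{N\Lambda}}(\exp X)^{1/N}\to|\pi_\Lambda(\exp X)|$, which is routine once the polynomial growth of $\dim\pi_{N\Lambda}$ is in hand; notably no separating-hyperplane or density-of-restricted-weights argument is required, because the limit converts character inequalities directly into the spectral-radius inequalities that Theorem~\ref{order-cond} requires.
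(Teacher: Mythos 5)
Your proof of the converse is correct, but it takes a genuinely different route from the paper's. You argue directly inside the representation theory of $G$: after conjugating $h_i$ into $A$, you sandwich the character of an irreducible $\pi_\Lambda$ between its spectral radius $e^{M_\lambda(X)}$ and $\dim(\pi_\Lambda)\,e^{M_\lambda(X)}$ (using Kostant's fact that the restricted weights lie in $\conv(W\cdot\lambda)$ and that the extreme weights $W\cdot\lambda$ occur), amplify by passing to the Cartan component $\pi_{N\Lambda}\subseteq\pi_\Lambda^{\otimes N}$, and let the polynomial growth of $\dim\pi_{N\Lambda}$ disappear under $N$-th roots, so that the character hypothesis yields $|\pi_\Lambda(h_1)|\ge|\pi_\Lambda(h_2)|$ for every irreducible and hence, by complete reducibility, for every $\pi$; Theorem~\ref{order-cond} then concludes. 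The paper instead argues by contradiction and avoids highest-weight machinery altogether: if $h_1\ngeq h_2$, Theorem~\ref{order-cond} supplies $\eta:G\to\GL_n(\C)$ with $|\eta(h_1)|<|\eta(h_2)|$; Lemma~\ref{CMJD-preserve} shows $\eta(h_1),\eta(h_2)$ are hyperbolic in $\SL_n(\C)$, hence conjugate to positive diagonal matrices; and Lemma~\ref{diag-matrix} composes $\eta$ with a high symmetric power $\mathrm{Sym}^m(\C^n)$ of the standard representation of $\GL_n(\C)$, with $m$ chosen so that $(|C|/|D|)^m$ beats the crude count $\binom{m+n-1}{n-1}$ of monomials, to reverse the character inequality. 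Both are amplification arguments, but the paper's needs only an elementary estimate on the complete homogeneous symmetric polynomial, whereas yours invokes the convexity theorem for restricted weights, the Cartan-product construction, and the Weyl dimension formula; in exchange, your version is direct rather than by contradiction and isolates the clean intermediate statement that the character inequalities already force the spectral-radius inequalities representation by representation. One small point to make explicit if you write this up: the maximum $M_\lambda(X)$ is actually attained by a restricted weight of $\pi_\Lambda$ (namely an extreme one in $W\cdot\lambda$), which is what makes the lower bound of your sandwich, and hence the identification of the spectral radius, legitimate.
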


Before proving the above result, we need the following two lemmas.

\begin{lemma}\label{diag-matrix}\rm
Let $C$ and $D$ be two diagonal matrices  in $\GL_n(\C)$ with
positive diagonal entries. If the spectral radii $|C|>|D|$, then
there exists a finite dimensional representation $\pi$ of
$\GL_n(\C)$ such that $\chi_{\pi}(C)>\chi_{\pi}(D)$.
\end{lemma}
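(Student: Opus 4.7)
The plan is to use symmetric powers of the defining representation. I will write $C=\diag(c_1,\ldots,c_n)$ and $D=\diag(d_1,\ldots,d_n)$ with $c_i,d_i>0$, set $c_*:=\max_i c_i=|C|$ and $d_*:=\max_i d_i=|D|$, and exploit the assumption $c_*>d_*$ to produce a finite-dimensional representation whose character separates them.

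For each positive integer $k$, I would take $\pi_k=S^k(\C^n)$, the $k$-th symmetric power of the standard representation of $\GL_n(\C)$. On a positive diagonal matrix this character is the complete homogeneous symmetric polynomial,
$$\chi_{\pi_k}(\diag(x_1,\ldots,x_n))=h_k(x_1,\ldots,x_n)=\sum_{1\le i_1\le\cdots\le i_k\le n}x_{i_1}\cdots x_{i_k}.$$
The core estimate I would then establish is the two-sided bound
$$\chi_{\pi_k}(C)\ge c_*^k,\qquad \chi_{\pi_k}(D)\le\binom{n+k-1}{n-1}d_*^k,$$
the first by keeping only the single monomial at the largest diagonal entry of $C$ and the second by bounding each of the $\binom{n+k-1}{n-1}$ monomials by $d_*^k$. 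Dividing yields
$$\frac{\chi_{\pi_k}(C)}{\chi_{\pi_k}(D)}\ge\frac{(c_*/d_*)^k}{\binom{n+k-1}{n-1}},$$
and since $c_*/d_*>1$ while $\binom{n+k-1}{n-1}$ grows only polynomially in $k$, the right-hand side tends to $\infty$. Choosing $k$ large enough that this ratio exceeds $1$ gives the required representation $\pi=\pi_k$.

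There is no serious obstacle here; the whole argument rests on the identity $\chi_{S^k}=h_k$ on a diagonal matrix together with the elementary fact that exponential growth beats polynomial growth. What makes the statement substantive is rather that it singles out the spectral radius — the largest diagonal entry — as precisely the quantity controlled by symmetric-power characters in the large-$k$ limit, so that a strict inequality on spectral radii forces a strict inequality of characters for some finite-dimensional $\pi$.
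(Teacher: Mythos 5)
Your proof is correct and follows essentially the same route as the paper: both use the $k$-th symmetric power of the standard representation, bound $\chi_{\pi_k}(C)$ below by $|C|^k$ and $\chi_{\pi_k}(D)$ above by $\binom{n+k-1}{n-1}|D|^k$, and let exponential growth in $k$ overwhelm the polynomial factor. The only cosmetic difference is that the paper makes the choice of $k$ explicit via the observation $\lim_{m\to\infty}(m+n)^{n/m}=1$.
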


\begin{proof}
Let $\pi_m$ be the representation of $\GL_n(\C)$ on the $m$-th
symmetric power of the natural representation $\text{Sym}^m(\C^n)$.
Then, the character $\chi_m$ of $\pi_m$ takes the following value on
$\diag(x)=\diag(x_1,x_2,\cdots,x_n)\in\GL_n(\C)$:
$$
\chi_m(\diag(x))=\sum_{\substack{\ell_1, \ell_2, \cdots,\ell_n\ge 0\\ \ell_1+ \cdots+\ell_n=m}} \; \prod_{i=1}^n {x_i^{\ell_i}}.
$$
So $\chi_m(\diag(x))$ is the sum of all $\binom{m+n-1}{n-1}$ monomials of
$x_1,\cdots,x_n$ of degree $m$.
Suppose
$c:=|C|>|D|=: d>0.$
Because
\ds{
\lim_{m\to\infty} (m+n)^{n/m}=1,
}
there exists a sufficiently large $m$ such that
$$
\frac{c}{d}>(m+n)^{n/m}.
$$
Write $C:=\diag(c_1,\cdots,c_n)$ and $D:=\diag(d_1,\cdots,d_n)$. Then
\BEA
\chi_m(C) &=& \sum_{\substack{\ell_1, \ell_2, \cdots,\ell_n\ge 0\\ \ell_1+ \cdots+\ell_n=m}} \; \prod_{i=1}^n c_i^{\ell_i}
\\
&\ge& c^m > (m+n)^n d^m > \binom{m+n-1}{n-1} d^m
\\
&\ge& \sum_{\substack{\ell_1, \ell_2, \cdots,\ell_n\ge 0\\ \ell_1+
\cdots+\ell_n=m}} \; \prod_{i=1}^n d_i^{\ell_i} \ =\ \chi_m(D). \EEA
This completes the proof.
\end{proof}

The following results are shown in \cite[Proposition 3.4 and its
proof]{Ko}: for a representation $\pi:G\to\GL_n(\C)$,
\begin{itemize}%%
\item
If   $u\in G$ is unipotent, then all the eigenvalues of $\pi(u)$ are
equal to 1;

\item
If $h\in G$ is hyperbolic in $G$, then $\pi(h)$ is diagonalizable
and all the eigenvalues of $\pi(h)$ are positive;

\item
If $e\in G$ is elliptic in $G$, then $\pi(e)$ is diagonalizable and
all the eigenvalues of $\pi(e)$ are of norm 1;

\item
If $g=ehu$ is the complete multiplicative Jordan decomposition of
$g$ in $G$, then  $\pi(g)=\pi(e)\pi(h)\pi(u)$ where $\pi(e)$,
$\pi(h)$, and $\pi(u)$ mutually commute. Hence
$\pi(g)=\pi(e)\pi(h)\pi(u)$ is the complete multiplicative Jordan
decomposition of $\pi(g)$ in $\GL_n(\C)$. %%
\end{itemize} %%
In addition, if $G$ is connected and semisimple, then $\pi(G)$ is
generated by
$$\exp(d\pi(\g))=\exp([d\pi(\g),d\pi(\g)])\subseteq \exp\sl_n(\C)=\SL_n(\C).$$
Therefore, $\pi(g)\in\SL_n(\C)$ and $\pi(g)=\pi(e)\pi(h)\pi(u)$ is
the complete multiplicative Jordan decomposition of $\pi(g)$ in
$\SL_n(\C)$. Consequently, we have

\begin{lemma}\label{CMJD-preserve}\rm
Let $g$ be an element of a connected real semisimple Lie group $G$,
and $\pi:G\to\GL_n(\C)$ a finite dimensional representation of $G$.
If $g=ehu$ is the complete multiplicative Jordan decomposition of
$g$ in $G$, then $\pi(g)=\pi(e)\pi(h)\pi(u)$ is the complete
multiplicative Jordan decomposition of $\pi(g)$ in $\SL_n(\C)$.
\end{lemma}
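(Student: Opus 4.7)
The plan is to consolidate the four bullet points that immediately precede the lemma together with the paper's earlier comparison of the complete multiplicative Jordan decomposition in $\GL_n(\C)$ and in $\SL_n(\C)$; essentially no new work is required, so the proof is a formal assembly of preceding material.

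First, I would apply the four bullet points to the given decomposition $g = ehu$. Since $\pi$ is a group homomorphism and $e,h,u$ mutually commute in $G$, the matrices $\pi(e),\pi(h),\pi(u) \in \GL_n(\C)$ mutually commute. The first three bullets then assert that $\pi(e)$ is diagonalizable over $\C$ with eigenvalues of modulus $1$, that $\pi(h)$ is diagonalizable with positive real eigenvalues, and that $\pi(u)$ has only the eigenvalue $1$. These are precisely the conditions in the paper's opening paragraph for $\pi(e)$, $\pi(h)$, $\pi(u)$ to be respectively elliptic, hyperbolic, and unipotent in $\GL_n(\C)$. Therefore $\pi(g) = \pi(e)\pi(h)\pi(u)$ is a complete multiplicative Jordan decomposition of $\pi(g)$ in $\GL_n(\C)$, and by uniqueness it is the only such decomposition.

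Second, I would show that $\pi(g) \in \SL_n(\C)$ using the displayed chain in the paragraph before the lemma. Connectedness of $G$ implies that $\pi(G)$ is generated by $\exp(d\pi(\g))$, while semisimplicity of $\g$ gives $\g = [\g,\g]$, whence $d\pi(\g) = [d\pi(\g),d\pi(\g)] \subseteq \sl_n(\C)$ and so $\pi(G) \subseteq \SL_n(\C)$. In particular $\pi(e),\pi(h),\pi(u),\pi(g)$ all lie in $\SL_n(\C)$.

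Finally, I would invoke the paper's earlier detailed analysis of the case $G = \SL_n(\C)$, where it is checked via the explicit formulas \eqref{CMJD-matrix} that the $\GL_n(\C)$-CMJD of any element of $\SL_n(\C)$ coincides with its $\SL_n(\C)$-CMJD (the phase, modulus, and unipotent parts of a Jordan form really do give elliptic, hyperbolic, and unipotent elements of $\SL_n(\C)$ in the Lie-theoretic sense, not just in the matrix sense). Applying this to $\pi(g)$ promotes the $\GL_n(\C)$-CMJD of Step 1 to an $\SL_n(\C)$-CMJD, which is exactly the conclusion. The only point requiring any thought is this upgrade from $\GL_n(\C)$ to $\SL_n(\C)$, but since it has already been carried out explicitly in the excerpt, there is no genuine obstacle; the lemma is essentially a formal consolidation citing the preceding paragraphs.
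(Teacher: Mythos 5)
Your proposal is correct and takes essentially the same route as the paper: the paper's own ``proof'' of Lemma \ref{CMJD-preserve} is exactly the preceding paragraph (the four bullet points from Kostant giving the $\GL_n(\C)$-CMJD of $\pi(g)$, the containment $\pi(G)\subseteq\SL_n(\C)$ via $\exp(d\pi(\g))=\exp([d\pi(\g),d\pi(\g)])$, and the earlier verification that the $\GL_n(\C)$- and $\SL_n(\C)$-decompositions agree on $\SL_n(\C)$), after which the lemma is stated with ``Consequently.'' Your assembly of these three ingredients matches the paper's argument step for step.
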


\medskip

Now we prove Theorem \ref{complete}:
\begin{proof}[Proof of Theorem \ref{complete}]
The necessary part is Theorem \ref{question}. It remains to prove the sufficient part. From the assumption,
 we have $\chi_{\pi}(h_1)\ge
\chi_{\pi}(h_2)$ for all finite dimensional representations $\pi$ of $G$. Now suppose $h_1 \ngeq h_2$
in $G$. Then Theorem \ref{order-cond}
implies that there exists a finite dimensional representation $\eta: G\to \GL_n(\C)$ of $G$
such that $|\eta(h_1)|<|\eta(h_2)|$. By Lemma
\ref{CMJD-preserve}, both $\eta(h_1)$ and $\eta(h_2)$ are hyperbolic in $\SL_n(\C)$.
So they are conjugate to certain diagonal matrices with
positive diagonal entries. Using Lemma \ref{diag-matrix} and the fact that character
 values are independent of conjugacy, there is a finite
dimensional representation $\pi_m$ of $\GL_n(\C)$ with character
$\chi_m$  such that
$$\chi_m(\eta(h_1))<\chi_m(\eta(h_2)).$$
Now $\rho :=\pi_m\circ\eta$ is a finite dimensional representation
of $G$ with character $\chi_{\rho}$ satisfying that
$$
\chi_{\rho}(h_1)=\chi_m(\eta(h_1))<\chi_m(\eta(h_2))=\chi_{\rho}(h_2).
$$
It contradicts our assumption $\chi_{\pi}(h_1)\ge
\chi_{\pi}(h_2)$ for all $\pi$. %Therefore, the only possibility is
%that $h_1\ge h_2$ in $G$.
\end{proof}

Note that  if $g\in\SL_n(\C)$ has the complete multiplicative Jordan
decomposition $g=e(g)h(g)u(g)$   in $\SL_n(\C)$, then the hyperbolic
component $h(g)$ of $g$ is diagonalizable and its eigenvalues are
equal to the eigenvalue moduli of $g$.

Given a finite dimensional representation $\pi$ of $G$ with
character $\chi_{\pi}$, let us denote by $|\chi_{\pi}|(g)$ the sum
of eigenvalue moduli of $\pi(g)$ for $g\in G$. Then, from the above
observation, we have \beq |\chi_{\pi}|(g):=\tr (h(\pi(g))). \eeq
Then Theorem \ref{complete} can be extended to an equivalent
condition for the partial ordering on {\em all elements} of $G$.

\begin{cor}\label{complete-g}\rm
 Let $G$ be a connected real semisimple Lie group  and $g_1, g_2\in G$.
 Then $g_1\ge g_2$ in $G$ if and only if $|\chi_{\pi}|(g_1)\ge |\chi_{\pi}|(g_2)$
 for all finite dimensional representations $\pi$ of $G$.
\end{cor}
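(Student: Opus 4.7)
The plan is to reduce Corollary \ref{complete-g} to Theorem \ref{complete} by showing that both sides of the claimed equivalence depend on $g_i$ only through its hyperbolic component $h(g_i)$.

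First I would handle the left-hand side. Since $h(g)$ is conjugate to $a(g)\in A$, the set $A(g) = \exp(\mathrm{conv}(W\cdot \log a(g)))$ defined in \eqref{A(g)} depends on $g$ only through $h(g)$; moreover, applying the same definition to the hyperbolic element $h(g)$ itself gives $A(h(g)) = A(g)$. Therefore the partial order satisfies $g_1\ge g_2$ if and only if $h(g_1)\ge h(g_2)$.

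Next I would handle the right-hand side by establishing the identity $|\chi_\pi|(g) = \chi_\pi(h(g))$ for every finite dimensional representation $\pi$ of $G$. By Lemma \ref{CMJD-preserve}, $\pi(g) = \pi(e)\pi(h)\pi(u)$ is the complete multiplicative Jordan decomposition of $\pi(g)$ in $\SL_n(\C)$, so the hyperbolic part of $\pi(g)$ is exactly $\pi(h(g))$. The eigenvalues of $\pi(h(g))$ are positive (hyperbolic), those of $\pi(e)$ have modulus $1$ (elliptic), and those of $\pi(u)$ are all equal to $1$ (unipotent); since the three factors commute, one can simultaneously triangularize and read off that the moduli of the eigenvalues of $\pi(g)$ coincide with the eigenvalues of $\pi(h(g))$. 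Hence $|\chi_\pi|(g) = \tr(\pi(h(g))) = \chi_\pi(h(g))$.

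Combining these two reductions with Theorem \ref{complete} applied to the hyperbolic elements $h(g_1)$ and $h(g_2)$ finishes the argument: $g_1\ge g_2 \Longleftrightarrow h(g_1)\ge h(g_2) \Longleftrightarrow \chi_\pi(h(g_1))\ge \chi_\pi(h(g_2))$ for all $\pi$ $\Longleftrightarrow |\chi_\pi|(g_1)\ge |\chi_\pi|(g_2)$ for all $\pi$. There is no real obstacle here; the only step that requires some care is the observation that $|\chi_\pi|(g) = \chi_\pi(h(g))$, which rests on Lemma \ref{CMJD-preserve} and the standard fact that commuting elliptic and unipotent factors do not affect the moduli of the eigenvalues contributed by the hyperbolic factor.
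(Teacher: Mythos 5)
Your proof is correct and follows essentially the same route as the paper: reduce to the hyperbolic parts via the definition of $A(g)$, identify $|\chi_\pi|(g)$ with $\chi_\pi(h(g))$ using Lemma \ref{CMJD-preserve}, and apply Theorem \ref{complete}. The identity $|\chi_\pi|(g)=\tr(h(\pi(g)))=\tr(\pi(h(g)))$, which you verify in detail, is exactly the observation the paper records in the remark preceding the corollary.
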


\begin{proof}
By the definitions \eqref{A(g)} and \eqref{partial-order} of the
partial order, $g_1\ge g_2$ in $G$ if and only if $h(g_1)\ge h(g_2)$
in $G$; if and only if $\chi_{\pi}(h(g_1))\ge \chi_{\pi}(h(g_2))$
for all finite dimensional representations $\pi$ of $G$ (Theorem
\ref{complete}), namely
$$
\tr(\pi(h(g_1)))\ge\tr(\pi(h(g_2))).
$$
According to Lemma \ref{CMJD-preserve}, $\pi(h(g))=h(\pi(g))$ for
all $g\in G$. Thus $\tr(\pi(h(g_1)))\ge\tr(\pi(h(g_2)))$ if and only
if $\tr(h(\pi(g_1)))\ge\tr(h(\pi(g_2)))$, namely
$|\chi_{\pi}|(g_1)\ge |\chi_{\pi}|(g_2)$.
\end{proof}

For a finite dimensional representation $\pi:G\to \GL_{n}(\C)$, we
denote by \beq \lambda_{\pi}^{(1)}(g)\ge
\lambda_{\pi}^{(2)}(g)\ge\cdots\ge \lambda_{\pi}^{(n)}(g)>0 \eeq the
eigenvalue moduli of $\pi(g)$ in non-increasing order for $g\in G$,
that is, $\lambda_{\pi}^{(1)}(g)\ge \cdots\ge
\lambda_{\pi}^{(n)}(g)$ are the eigenvalues of
$h(\pi(g))=\pi(h(g))$. Then Theorem \ref{order-cond} says that
$g_1\ge g_2$ in $G$ if and only if $\lambda_{\pi}^{(1)}(g_1)\ge
\lambda_{\pi}^{(1)}(g_2)$ for all finite dimensional representations
$\pi$ of $G$, and Corollary \ref{complete-g} says that $g_1\ge g_2$
in $G$ if and only if $\sum_{i=1}^n \lambda_{\pi}^{(i)}(g_1)\ge
\sum_{i=1}^n \lambda_{\pi}^{(i)}(g_2)$ for all finite dimensional
representations $\pi: G\to \GL_{n}(\C)$. In general, we have the
following result.

\begin{theorem}\label{majorization}\rm
Let $G$ be a connected real semisimple Lie group  and $g_1, g_2\in
G$.

\ben

\item
If $g_1\ge g_2$ in $G$, then for every finite dimensional
representation $\pi: G\to \GL_{n}(\C)$, the following inequalities
hold for $k=1,\cdots, n$:%%
\BEQ \prod_{i=1}^{k} \lambda_{\pi}^{(i)}(g_1) \ge \prod_{i=1}^{k}
\lambda_{\pi}^{(i)}(g_2),\qquad \sum_{i=1}^{k}
\lambda_{\pi}^{(i)}(g_1) \ge \sum_{i=1}^{k}
\lambda_{\pi}^{(i)}(g_2). %%
\EEQ %%

\item
Fix $k\in\Z^+$. For every finite dimensional representation $\pi:
G\to \GL_{n}(\C)$ with $n\ge k$, if we have %%
\BEQ  %%
\sum_{i=1}^{k} \lambda_{\pi}^{(i)}(g_1) \ge \sum_{i=1}^{k}
\lambda_{\pi}^{(i)}(g_2), %%
\EEQ %%
then $g_1\ge g_2$ in $G$.

\item
Fix $k\in\Z^+$. For every finite dimensional representation $\pi:
G\to \GL_{n}(\C)$  with $n\ge k$, if we have %%
\BEQ  %%
\prod_{i=1}^{k} \lambda_{\pi}^{(i)}(g_1) \ge \prod_{i=1}^{k}
\lambda_{\pi}^{(i)}(g_2), %%
\EEQ %%
then $g_1\ge g_2$ in $G$. \een
\end{theorem}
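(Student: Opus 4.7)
The plan is to reduce each part to Theorem \ref{order-cond}, which characterizes $g_1\ge g_2$ via spectral radii of representations; the two gadgets that do all the work are exterior powers (for part (1)) and direct sums (for parts (2) and (3)).

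For part (1), I would fix a representation $\pi\colon G\to\GL_n(\C)$ and, for each $1\le k\le n$, consider the exterior power $\wedge^k\pi$. Since the eigenvalues of $(\wedge^k\pi)(g)$ are the products $\mu_{i_1}\cdots\mu_{i_k}$ taken over $k$-element subsets of eigenvalues of $\pi(g)$, its spectral radius equals $\prod_{i=1}^k\lambda_{\pi}^{(i)}(g)$. Applying Theorem \ref{order-cond} to $\wedge^k\pi$ immediately yields the product inequalities. The sum inequalities will then follow from the classical majorization principle that weak log-majorization implies weak majorization: for positive decreasing sequences $x_1\ge\cdots\ge x_n$ and $y_1\ge\cdots\ge y_n$, the hypothesis $\prod_{i=1}^k x_i\ge\prod_{i=1}^k y_i$ for all $k$ forces $\sum_{i=1}^k x_i\ge\sum_{i=1}^k y_i$ for all $k$. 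This is obtained by applying the increasing convex function $t\mapsto e^t$ to the log-sequences, and I would simply cite Marshall--Olkin or Bhatia's \emph{Matrix Analysis} rather than redo the argument.

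For parts (2) and (3), the idea is to extract $|\pi(g)|$ from the top $k$ eigenvalue moduli by boosting multiplicities. Given an arbitrary finite-dimensional representation $\pi\colon G\to\GL_n(\C)$, the direct sum $\pi^{\oplus k}\colon G\to\GL_{nk}(\C)$ has dimension $nk\ge k$, and its eigenvalues are those of $\pi(g)$, each repeated $k$ times. Hence the top $k$ eigenvalue moduli of $(\pi^{\oplus k})(g)$ are all equal to $|\pi(g)|$, so $\sum_{i=1}^{k}\lambda_{\pi^{\oplus k}}^{(i)}(g)=k\,|\pi(g)|$ and $\prod_{i=1}^{k}\lambda_{\pi^{\oplus k}}^{(i)}(g)=|\pi(g)|^{k}$. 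Applying the hypothesis of part (2) (resp.\ part (3)) to $\pi^{\oplus k}$ gives $|\pi(g_1)|\ge|\pi(g_2)|$; since $\pi$ was arbitrary, Theorem \ref{order-cond} finishes the job.

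The main obstacle I anticipate is the weak-log-majorization-implies-weak-majorization step inside part (1): although standard, it is the one place where a real external input (or a short auxiliary lemma) is needed. Everything else is a direct application of Theorem \ref{order-cond} once the correct auxiliary representation $\wedge^k\pi$ or $\pi^{\oplus k}$ is in hand.
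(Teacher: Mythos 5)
Your proposal is correct and follows essentially the same route as the paper: exterior powers $\wedge^k\pi$ plus Theorem \ref{order-cond} for the product inequalities in (1), the standard fact that (weak) log-majorization of positive sequences implies weak majorization for the sum inequalities (the paper cites Bhatia, Example II.3.5(vii), and additionally notes $\prod_{i=1}^n\lambda_\pi^{(i)}(g)=1$ since $\pi(G)\subseteq\SL_n(\C)$), and the direct sum $\pi^{\oplus k}$ to recover $|\pi(g_1)|\ge|\pi(g_2)|$ in (2) and (3).
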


\begin{proof}
\text{} %%
\ben %%
\item Suppose that $g_1\ge g_2$ in $G$. Given any finite dimensional
representation $\pi:G\to\GL_{n}(\C)$, let  $\rho_k: \GL_{n}(\C)\to
\GL(\wedge^k \C^n)$ denote the fundamental representation of
$\GL_{n}(\C)$ on the $k$-th exterior power of $\C^n$,
$k=1,\cdots,n$. Then by Theorem \ref{order-cond},   %%
\BEQ %%
|\rho_k(\pi(g_1))| \ge |\rho_k(\pi(g_2))|,\qquad k=1,\cdots,n-1, %%
\EEQ %%
and by $\pi(G)\subseteq \SL_{n}(\C)$,  %%
$$\qquad
|\rho_n(\pi(g_1))|
=|\det(\pi(g_1))|=1=|\det(\pi(g_2))|=|\rho_n(\pi(g_2))|.%%
$$ %%
Equivalently,
\BEA %%
\qquad \prod_{i=1}^k \lambda_{\pi}^{(i)}(g_1) &\ge & \prod_{i=1}^k
\lambda_{\pi}^{(i)}(g_2),\qquad k = 1, 2, \cdots,n-1,
\\
\prod_{i=1}^n \lambda_{\pi}^{(i)}(g_1) &=& \prod_{i=1}^n
\lambda_{\pi}^{(i)}(g_2)=1. \EEA  %%
So the vector of  eigenvalue moduli of $\pi(g_1)$ multiplicatively
majorizes that of $\pi(g_2)$. Moreover,
 multiplicative majorization
of vectors with real positive numbers implies additive majorization
\cite[Example II.3.5(vii)]{Bh}. Hence%%
\BEA \sum_{i=1}^k \lambda_{\pi}^{(i)}(g_1) &\ge & \sum_{i=1}^k
\lambda_{\pi}^{(i)}(g_2),\qquad k = 1, 2, \cdots,n. \EEA %%

\item
Fix $k\in\Z^+$. Let $\pi:G\to\GL_{n}(\C)$ be an arbitrary finite
dimensional representation of $G$.  Let $k\pi$ denote the
representation formed by taking the direct sum of $k$ copies of
$\pi$. Then the representation space of $k\pi$ has dimension $kn \ge
k$. By assumption,
$$k\lambda_{\pi}^{(1)}(g_1) = \sum_{i=1}^k \lambda_{k\pi}^{(i)}(g_1)
\ge \sum_{i=1}^k \lambda_{k\pi}^{(i)}(g_2) =
k\lambda_{\pi}^{(1)}(g_2).$$ %%
So $|\pi(g_1)|\ge |\pi(g_2)|$. Thus $g_1\ge g_2$ in $G$ by Theorem
\ref{order-cond}. %%

\item
The proof is similar to that of (2). %%
\een
\end{proof}

\vspace{3mm}
\begin{flushleft}
{\bf Acknowledgement:} We thank Tin-Yau Tam for helpful discussions.

\end{flushleft}

\end{document}